\mathchardef\ordinarycolon\mathcode`\:
\newcommand{\ts}{\text{  }}
\newtheorem{theorem}{Theorem}[section]
\newtheorem{lemma}[theorem]{Lemma}
\title{Null-Controllability of a Non-Local Heat Equation}
\author{Steven Walton}
\date{}
\begin{document}
   
\maketitle
{\bf Keywords } Null-Control of PDEs, Non-local Equations, Spectral Inequality
\begin{abstract}
    We consider the null-controllability of a non-local heat equation by interior $L^2(\Omega)$ controls. We confirm a conjecture of Lissy and Zuazua by showing that it is enough to assume that the kernel $k(x,\xi)$ is symmetric and $k(x,\xi)\in L^2(\Omega\times\Omega)$ in order to obtain the result.  The result is obtained by treating the non-local linear operator $K:L^2(\Omega)\rightarrow L^2(\Omega)$ as a compact and bounded perturbation of the Dirichlet-Laplacian, $A$, which leads to useful bounds on the semigroup generated by $(A+K,\mathcal{D}(A))$.  Using this approach, we are also able to estimate the cost of control. 
\end{abstract}

\tableofcontents
\newpage

\section{Introduction}
Let $\Omega \subset \mathbb{R}^d$ with $\partial \Omega \in C^2$ and $\omega \subset\subset \Omega$ with boundary the same regularity as $\Omega$.  Let $T>0$, we use the notation $Q = (0,T)\times \Omega$, $\partial_x Q = (0,T)\times \partial\Omega$ and $Q\big|_{t=t'} = \{t'\}\times \Omega$ for any $t'\in[0,T]$. We consider the null-controllability of a non-local heat equation, 
\begin{equation*}
\tag{NLH}
\begin{aligned}
    (\partial_t - \Delta)u(x,t) = f\chi(\omega) +\int_\Omega k(x,\xi)u(\xi,t) d\xi & \text{ in }Q\\
    u(x,t) = 0 & \text{ on } \partial_x Q\\
    u(x,0) = u_0 & \text{ in } Q\Big|_{t=0}
\end{aligned}
\label{eqn:NLH}
\end{equation*}
We will assume that  $k(x,\xi) = k(\xi,x) \in L^2(\Omega\times\Omega)$.
\par
We have in both cases $f=f(x,t) \in L^2((0,T)\times\omega)$, $u_0\in L^2(\Omega)$ and $\chi(\omega)$ is the characteristic function of the interior subset $\omega$ and we wish to show that for controls $f$,
\begin{equation}
    \tag{NullCond}
    \begin{aligned}
    \| f\|^2_{L^2((0,T);\omega)}\leq \kappa_T \| u_0 \|^2_{L^2(\Omega)}\\
    \| u(x,T) \|_{L^2(\Omega)} = 0
    \end{aligned}
    \label{eqn:NullCond}
\end{equation}
That is the state is driven to zero in time $T$ by the controls $f$.  The constant $\kappa_T$ is the cost of control and will have the form 
\[
 \kappa_T = C(t,T) e^{C/T^\alpha}
\]
which blows up as $T\rightarrow 0^+$.  
\par
We will use a by now classical approach to demonstrate null-controllability by verifying the adjoint of \ref{eqn:NLH} satisfies an observability inequality
\begin{equation}\tag{FinalObs}
\begin{aligned}
    \|\varphi(0)\|^2_{L^2(\Omega)} \leq  \kappa_T \| \varphi\|^2_{L^2(0,T;\omega)}
\end{aligned}
\label{eqn:FinalObs}
\end{equation}
where $\varphi = \varphi(x,t)$ denotes the solution to the adjoint equation in either case with differing cost of control.
\par
Before we outline our strategy for verifying \ref{eqn:FinalObs}, we would like to briefly recall previously obtained results to similar problems.

\section{Background and Known Results}
Before discussing equations with a non-local term, we outline a few of the known results for parabolic equations. There are two common approaches to showing null-controllability, the Lebeau-Robbiano strategy based on local Carleman estimates and that of Imanouvilov and Fursikov based on global Carleman estimates.
\par  
The former strategy was first developed in \cite{LR95} showing directly the controllability of the heat equation.  These results were extended in \cite{LebZua98} and \cite{JerLeb99}.  We will mention these works again later upon introducing the spectral observability inequality \ref{eqn:SpecObs}.
\par
The latter strategy can be found in \cite{ImanFurs96}.  These estimates rely on suitable choice of weight functions to derive the Carleman inequalities.  
\par
The trouble begins upon introduction of the non-local term.  The global Carleman estimate for \ref{eqn:NLH}  would look like (\cite{FCLZ16} equation 9)
\begin{equation*}
    \| \rho \varphi \|^2_{L^2(0,T;\Omega)} \leqslant  C(\epsilon)\| \rho\varphi \|^2_{L^2(0,T;\omega)} + \epsilon \|\rho K\varphi \|^2_{L^2(0,T;\Omega)}
\end{equation*}
Where $\epsilon >0$ can be arbitrarily small and $\rho = \rho(x,t) \rightarrow 0$ as $t\rightarrow T$ is the weight function as mentioned above. Here and in the sequel we will write the non-local linear operator $K: L^2(\Omega) \rightarrow L^2(\Omega)$ for the non-local term, e.g. for $w\in L^2(\Omega)$
\begin{equation}
\tag{NLoc}
    Kw = \int_\Omega k(x,\xi) w (\xi)d\xi
\label{eqn:NLoc}
\end{equation}
The above Carleman estimate is a pointwise estimate and thus the non-local term is unable to be absorbed by the right hand side.
\par
To the author's knowledge, the first to tackle \ref{eqn:NLH} are the authors of \cite{FCLZ16}, in which they assume $K$ to be analytic and apply a compactness-uniqueness argument to obtain the observability inequality.  Later in \cite{LisZua18}, the results are extended to parabolic systems with the same assumption on $K$.  In neither case is an estimate for the cost of control obtained.  For the 1-d case, \cite{MicTak18} assume the kernel to be degenerate, $k=g(x)h(\xi)$, and are able to obtain a spectral inequality via the formation of a Riesz basis.  The authors in \cite{BicH-S19} allow for time dependence of $k$ and assume an exponential decay condition holds for $K$ which enforces a compatibility of the non-local term with the weight functions $\rho$ in the Carleman estimate.    
\par
Let us quote from \cite{LisZua18},
\say{Hence, a natural conjecture would be that the main result ... holds under the assumption that $A(x, \xi) \in \mathcal{M}_n(L^2(\Omega\times\Omega))...$} Where $A$ is a matrix version of our $k$. The main goals of this paper is to give a positive answer to the above conjecture, and therefore the only assumptions we make on the kernel $k$ have already been stated, namely that it is symmetric and a member of $L^2(\Omega\times\Omega)$.
\par
Throughout what follows, $A=\Delta$ is the Dirichlet-Laplacian, $K$ is as in equation \ref{eqn:NLoc}, $\mathcal{D}(A) = H^2(\Omega)\cap H^1_0(\Omega)$ and $L=A+K$ with $\mathcal{D}(L)=\mathcal{D}(A)$.  We will simply write $A$ and $L$ for the generators $(A,\mathcal{D}(A))$ and $(L,\mathcal{D}(A))$, respectively. We write the adjoint equations to \ref{eqn:NLH} in the abstract formulation (using $t\mapsto T-t$), 
\begin{equation}
    \tag{Adj}
    \begin{aligned}
        (\partial_t - L)\varphi = 0 \text{ in } Q\\
        \varphi(0) = \varphi_0 \text{ in } Q\Big|_{t=0}
    \end{aligned}
    \label{eqn:Adj}
\end{equation}
Which has solution
\[
 \varphi(t) = e^{L(T-t)}\varphi_0,
\]
as will be verified later once the properties of the semigroup, $(e^{L(T-t)})_{t\geq 0}$, generated by $L$ have been established. Thus, we may rewrite \ref{eqn:FinalObs} as
\begin{equation}
    \tag{SemObs}
    \| e^{LT}\varphi_0\|^2_{L^2(\Omega)} \leqslant \kappa_T \| e^{L(T-t)}\varphi_0\|^2_{L^2(0,T;\omega)}
    \label{eqn:SemObs}
\end{equation}
 which will be the inequality we seek to verify. For simplicity, we will write simply $t$ for $T-t$, since we will not deal with the primal problem again.
\subsubsection*{Outline}

In the next section, we recall some definitions and results from the theory of semigroups and show that $L$ generates an analytic semigroup and possesses a left inverse.  We also detail the spectral observability inequality previously mentioned.  With these preliminaries in hand, we proceed with the proof of the observability inequality \ref{eqn:SemObs} and cost estimate simultaneously.


\section{Preliminaries}
 Here and elsewhere, when it is unambiguous, we write $\| \cdot \|$ for $\|\cdot \|_{\mathcal{L}(L^2(\Omega))}$ and $\|\cdot\|_{L^2(\Omega)}$. If $X$ is a subset of $\Omega$, then $\|\cdot\|_X$ denotes the restriction of the above norms to that subset.
\begin{lemma}
    The operator $K$ is compact and bounded.
\end{lemma}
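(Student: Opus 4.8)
The plan is to show that $K$ is an integral (Hilbert--Schmidt) operator on $L^2(\Omega)$ and to exploit the hypothesis $k\in L^2(\Omega\times\Omega)$ directly. First I would establish boundedness: for $w\in L^2(\Omega)$, applying the Cauchy--Schwarz inequality to the inner integral gives, for almost every $x$,
\[
|Kw(x)|^2 = \left|\int_\Omega k(x,\xi)\,w(\xi)\,d\xi\right|^2 \leq \left(\int_\Omega |k(x,\xi)|^2\,d\xi\right)\|w\|_{L^2(\Omega)}^2.
\]
Integrating this over $x\in\Omega$ and using Fubini (legitimate since $k\in L^2(\Omega\times\Omega)$) yields $\|Kw\|_{L^2(\Omega)}^2 \leq \|k\|_{L^2(\Omega\times\Omega)}^2\,\|w\|_{L^2(\Omega)}^2$, so $K$ is bounded with operator norm at most the $L^2$ norm of the kernel. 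This is the routine half.

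For compactness, the cleanest route I would take is approximation by finite-rank operators. Since $L^2(\Omega)$ is separable, it admits an orthonormal basis $\{e_n\}$, and the products $\{e_m(x)\,e_n(\xi)\}_{m,n}$ form an orthonormal basis of $L^2(\Omega\times\Omega)$. Hence $k$ has an $L^2$-convergent expansion $k = \sum_{m,n} c_{mn}\,e_m\otimes e_n$, and I would define the truncated kernels $k_N = \sum_{m,n\leq N} c_{mn}\,e_m\otimes e_n$ together with their associated integral operators $K_N$. Each $K_N$ has finite rank (its range is spanned by $e_1,\dots,e_N$), and by the boundedness estimate applied to the kernel $k-k_N$ we obtain $\|K - K_N\| \leq \|k - k_N\|_{L^2(\Omega\times\Omega)} \to 0$ as $N\to\infty$. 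Since $K$ is a norm limit of finite-rank operators, it is compact.

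I expect the main technical point — though it is standard rather than genuinely hard — to be justifying the kernel expansion and the identification of the truncation's integral operator with a finite-rank operator; everything else reduces to Cauchy--Schwarz and the completeness of the tensor-product basis. An alternative that avoids the explicit basis expansion would be to approximate $k$ in $L^2(\Omega\times\Omega)$ by continuous (or simple) kernels of the form $\sum_j g_j(x)h_j(\xi)$ via density of such separable functions, noting that each such kernel gives a finite-rank operator and the same norm bound $\|K - K_N\|\leq \|k-k_N\|_{L^2}$ forces compactness in the limit. I would likely present the finite-rank approximation argument, since it makes the Hilbert--Schmidt structure transparent and the symmetry hypothesis $k(x,\xi)=k(\xi,x)$ then immediately gives self-adjointness of $K$, a fact that will be convenient later.
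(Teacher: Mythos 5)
Your proposal is correct and is essentially the paper's own argument: the paper simply observes that $k\in L^2(\Omega\times\Omega)$ makes $K$ a Hilbert--Schmidt operator, hence bounded and compact, while you have written out the standard proof of that fact (Cauchy--Schwarz for the norm bound $\|K\|\leq\|k\|_{L^2(\Omega\times\Omega)}$, and finite-rank approximation via the tensor-product basis for compactness). Nothing is missing; you have merely expanded the citation into its classical proof.
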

\begin{proof}
    From our assumption on the kernel, $k$, we have by definition that $K$ is a Hilbert-Schmidt operator and thus continuous and compact.
\end{proof}

The following is a well-known result and may be found in \cite{Kue19} Chapter 14, for example.
\begin{theorem}\label{anaA}
    The semigroup generated by $A$ is analytic and satisfies, for all $t\geq 0$ and constants $C,\beta > 0$,
    \begin{equation*}
    \begin{aligned}
        \| e^{A t} \| \leqslant C e^{\beta t} 
    \end{aligned}
    \end{equation*} 
    
\end{theorem}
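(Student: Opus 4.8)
The plan is to exploit the fact that the Dirichlet-Laplacian $A=\Delta$ is a self-adjoint operator on $L^2(\Omega)$ that is bounded above, and then to build the semigroup and its analytic continuation directly from the spectral theorem. The generation of an analytic semigroup by such an operator is standard, so the substance of the argument is to verify the two structural facts — self-adjointness and semiboundedness — and then read off the required norm bound.

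First I would establish that $(A,\mathcal{D}(A))$ with $\mathcal{D}(A)=H^2(\Omega)\cap H^1_0(\Omega)$ is self-adjoint. Symmetry follows from Green's identity: for $u,v\in\mathcal{D}(A)$ the boundary terms vanish because of the homogeneous Dirichlet condition, so $\langle Au,v\rangle = -\int_\Omega \nabla u\cdot\nabla v\,dx = \langle u,Av\rangle$. That the domain is exactly $H^2\cap H^1_0$ (rather than something larger) is where the regularity hypothesis $\partial\Omega\in C^2$ enters, through elliptic regularity for the Dirichlet problem; this is precisely the point at which I would lean on the cited reference. Taking $v=u$ in the same computation gives $\langle Au,u\rangle = -\|\nabla u\|^2 \le 0$, and the Poincar\'e inequality on the bounded domain $\Omega$ upgrades this to $\langle Au,u\rangle \le -c\|u\|^2$ for some $c>0$, so that $A\le -cI$ and $\sigma(A)\subset(-\infty,-c]$.

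With self-adjointness and the spectral bound in hand, I would invoke the spectral theorem to define $e^{tA}$ through the Borel functional calculus, $e^{tA}=\int_{(-\infty,-c]} e^{t\lambda}\,dE_\lambda$. Analyticity is then immediate: for $z$ in the open right half-plane the function $\lambda\mapsto e^{z\lambda}$ is bounded on $\sigma(A)$ since $|e^{z\lambda}| = e^{(\operatorname{Re}z)\lambda}\le 1$ whenever $\operatorname{Re}z>0$ and $\lambda<0$, so $z\mapsto e^{zA}$ extends to a bounded analytic family on $\{\operatorname{Re}z>0\}$, i.e.\ an analytic semigroup of angle $\pi/2$. Equivalently, one checks sectoriality directly: for $\lambda$ off the ray $(-\infty,-c]$ the self-adjoint resolvent estimate $\|(\lambda-A)^{-1}\|\le 1/\operatorname{dist}(\lambda,\sigma(A))$ holds, which is exactly the hypothesis of the analytic-generation theorem quoted from \cite{Kue19}.

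Finally, the norm bound is the easy part. The same functional calculus gives $\|e^{tA}\| = \sup_{\lambda\in\sigma(A)} e^{t\lambda} = e^{-ct}\le 1$ for all $t\ge 0$, so the claimed estimate $\|e^{tA}\|\le Ce^{\beta t}$ holds with room to spare for any $C\ge 1$ and any $\beta>0$. The only genuinely delicate step is the domain identification underlying self-adjointness, which is the reason the statement is invoked as a known result rather than proved from scratch; everything downstream of the spectral theorem is formal.
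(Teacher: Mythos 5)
Your proof is correct, but it takes a genuinely different route from the paper --- which in fact gives no proof at all: Theorem \ref{anaA} is invoked there as a known result, with a pointer to \cite{Kue19}, Chapter 14, where analytic generation comes out of the general sectorial-operator/resolvent-estimate framework. Your argument instead exploits the special structure of $A$: symmetry via Green's identity, domain identification $\mathcal{D}(A)=H^2(\Omega)\cap H^1_0(\Omega)$ by elliptic regularity (where $\partial\Omega\in C^2$ enters), the Poincar\'e bound $\langle Au,u\rangle\le -c\|u\|^2$, and then the spectral theorem to define $e^{zA}$ and continue it analytically to the right half-plane. What this buys is a self-contained proof of a strictly stronger statement: $\|e^{tA}\|\le e^{-ct}\le 1$, so the claimed bound holds with $C=1$ and any $\beta>0$, and the constants in the theorem are artifacts of the general framework rather than necessities for $A$. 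What the cited sectorial route buys is generality: it does not need self-adjointness, and it is exactly the machinery reused in Theorem \ref{analytic_expLt} for the perturbed generator $L=A+K$ via \cite{Dav07}; your spectral argument would extend to $L$ only after the separate observation that $L$ is again self-adjoint (true here, since $k$ is symmetric and $K$ is bounded). Two small points worth tightening: self-adjointness requires more than the symmetry computation --- one also needs a range condition, e.g.\ surjectivity of $A:\mathcal{D}(A)\to L^2(\Omega)$, which is the same elliptic-regularity input you already invoke; and ``analyticity is then immediate'' deserves one more line, e.g.\ $\sup_{\lambda\le -c}|\lambda|e^{(\mathrm{Re}\,z)\lambda}<\infty$ for $\mathrm{Re}\,z>0$, which yields norm-differentiability of $z\mapsto e^{zA}$ through the functional calculus.
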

Due to the self-adjointness of $L$, and the fact that $K$ is compact and bounded, we may conclude that $L$ is a sectorial operator also, and thus $L$ generates an analytic semigroup.  The estimate in the following theorem is proved in \cite{Dav07}, Theorem 11.4.1.
\begin{theorem}\label{analytic_expLt}
 The semigroup generated by $L$ is analytic and satisfies
 \begin{equation}
     \tag{SemEst}
     \begin{aligned}
       \| e^{L t} \| \leqslant C e^{(\beta + C\|K\|) t} 
     \end{aligned}
     \label{eqn:SemEst}
 \end{equation}
 with $C,\beta$ the same as in Theorem \ref{anaA}.
\end{theorem}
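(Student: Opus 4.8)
The plan is to treat $L=A+K$ as a bounded perturbation of the sectorial generator $A$ and to prove the two assertions — analyticity of the semigroup and the exponential bound \ref{eqn:SemEst} — separately, both resting on the boundedness of $K$ supplied by the preceding lemma. For analyticity I would follow the paper's own framing: since $A$ is the Dirichlet-Laplacian it is self-adjoint and bounded above, and since the kernel satisfies $k(x,\xi)=k(\xi,x)$ the Hilbert--Schmidt operator $K$ is self-adjoint and bounded, so $L=A+K$ is self-adjoint on the unchanged domain $\mathcal{D}(A)$ and bounded above by $\|K\|$; a self-adjoint operator bounded above automatically generates an analytic semigroup.

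Independently of self-adjointness, I would note that sectoriality is stable under bounded perturbation, which is the mechanism that also feeds the estimate. Since $A$ is sectorial, after a shift by $\beta$ its resolvent set contains a sector and obeys $\|R(\lambda,A)\|\leq M/|\lambda|$ there. For $\lambda$ in the sector with $|\lambda|$ large enough that $\|KR(\lambda,A)\|\leq M\|K\|/|\lambda|<1$, the Neumann series converges and gives
\[
R(\lambda,L)=R(\lambda,A)\bigl(I-KR(\lambda,A)\bigr)^{-1},
\]
so $\lambda\in\rho(L)$ and the sectorial resolvent bound survives with an adjusted constant. Hence $L$ is sectorial and generates an analytic semigroup on $\mathcal{D}(A)$.

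For the quantitative bound the plan is to use the variation-of-parameters identity
\[
e^{Lt}=e^{At}+\int_0^t e^{A(t-s)}\,K\,e^{Ls}\,ds,
\]
which I would justify by checking that the right-hand side solves the abstract integral equation characterising $(e^{Lt})_{t\geq 0}$; because $K$ is everywhere-defined and bounded, the associated Dyson series $e^{Lt}=\sum_{n\geq 0}S_n(t)$, with $S_0(t)=e^{At}$ and $S_n(t)=\int_0^t e^{A(t-s)}KS_{n-1}(s)\,ds$, converges in operator norm uniformly on compact time intervals. Taking norms in the identity and inserting $\|e^{At}\|\leq Ce^{\beta t}$ from Theorem \ref{anaA} yields
\[
\|e^{Lt}\|\leq Ce^{\beta t}+C\|K\|\int_0^t e^{\beta(t-s)}\|e^{Ls}\|\,ds.
\]
Setting $g(t)=e^{-\beta t}\|e^{Lt}\|$ reduces this to $g(t)\leq C+C\|K\|\int_0^t g(s)\,ds$, and Gronwall's inequality gives $g(t)\leq Ce^{C\|K\|t}$, i.e. the claimed bound $\|e^{Lt}\|\leq Ce^{(\beta+C\|K\|)t}$ with precisely the same $C,\beta$ as in Theorem \ref{anaA}. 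The same conclusion follows by estimating the Dyson series directly, since an easy induction gives $\|S_n(t)\|\leq Ce^{\beta t}(C\|K\|t)^n/n!$, whose sum is exactly $Ce^{(\beta+C\|K\|)t}$.

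The step I expect to demand the most care is not the Gronwall estimate but the bookkeeping that the perturbation series genuinely represents the semigroup generated by $(L,\mathcal{D}(A))$ — that adjoining the bounded $K$ neither alters the domain nor destroys strong continuity, and that the generator of the resulting family is indeed $A+K$. This is the standard benign-perturbation verification; I would emphasise that only the \emph{boundedness} of $K$ is used here, its compactness playing no role in this estimate and being reserved for the later spectral arguments.
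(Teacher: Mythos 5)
Your proposal is correct and takes essentially the same approach as the paper: there, too, $L=A+K$ is treated as a bounded self-adjoint perturbation of the sectorial operator $A$, with analyticity deduced exactly as you do, and the estimate \ref{eqn:SemEst} obtained by citing Theorem 11.4.1 of Davies' book rather than proving it. Your Duhamel/Gronwall and Dyson-series arguments are precisely the standard proof of that cited perturbation theorem, so your write-up simply makes the paper's citation self-contained.
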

Our last theorem concerning the semigroup $(e^{Lt})_{t \geq 0}$ establishes the existence of a left-inverse. It is well known that the heat semigroup is not exactly observable, except in the case of the trivial subset. However, this is exactly the scenario we need for $(e^{Lt})_{t \geq 0}$ in the computations of the next section.  Namely, we only need that $\| \varphi_0 \|^2_{L^2(\omega)} \leq C(t)\| e^{Lt}\varphi_0 \|^2_{L^2(\omega)}$ in one line of the computations to follow.  We add these remarks to emphasize that we are {\bf not} claiming exact observability from $\mathcal{L}(L^2(\Omega),L^2(\omega))$, which would not in general be true even for $(e^{At})_{t\geq 0}$. \\

\begin{theorem}\label{inverse} The semigroup $(e^{Lt})_{t\geq 0}$ possesses left-inverses in $\mathcal{L}(L^2(\omega))$.  That is, there exists a constant, $\zeta(t)$, such that for every $\upsilon \in L^2(\Omega)$
\begin{equation}
    \tag{SemInv}
    \zeta(t)\|\upsilon\|_\omega \leqslant \| e^{Lt}\upsilon \|_\omega.
    \label{SemInv}
\end{equation}
\end{theorem}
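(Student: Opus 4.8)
The plan is to lean on the two structural facts already in hand: $L=A+K$ is self-adjoint (since $A$ is self-adjoint and $K$ has a symmetric kernel) and it generates an analytic semigroup (Theorem~\ref{analytic_expLt}). Because $A=\Delta$ has compact resolvent on the bounded domain $\Omega$ and $K$ is a bounded compact perturbation, $L$ has compact resolvent as well, so its spectrum is discrete and there is an orthonormal basis $\{\psi_j\}$ of eigenfunctions, $L\psi_j=\mu_j\psi_j$ with $\mu_j\to-\infty$. On this basis the semigroup acts diagonally,
\[
 e^{Lt}\upsilon=\sum_j e^{\mu_j t}\langle\upsilon,\psi_j\rangle\,\psi_j,
\]
which makes both the candidate inverse and the source of difficulty explicit.

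First I would settle injectivity of $e^{Lt}$ for each fixed $t>0$. Since $\|e^{Lt}\upsilon\|^2=\sum_j e^{2\mu_j t}|\langle\upsilon,\psi_j\rangle|^2$ and every factor $e^{2\mu_j t}>0$, the equation $e^{Lt}\upsilon=0$ forces $\upsilon=0$; the same conclusion follows abstractly from analyticity, via the semigroup law (which propagates a zero to all later times) together with strong continuity at $t=0$. Injectivity already yields an algebraic left inverse on $\mathrm{Ran}\,e^{Lt}$, and the whole content of the theorem is to make this quantitative in the form \eqref{SemInv}. To produce the bound I would split the frequencies using the spectral projection $\Pi_\lambda$ onto the modes with $\mu_j\ge-\lambda$: on $\mathrm{Ran}\,\Pi_\lambda$ the semigroup is boundedly invertible, with $\|\Pi_\lambda\upsilon\|\le e^{\lambda t}\|e^{Lt}\Pi_\lambda\upsilon\|$, and I would then pass to the $\omega$-norm and try to absorb the high-frequency tail $(I-\Pi_\lambda)\upsilon$, optimizing $\lambda=\lambda(t)$ at the end.

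The step I expect to be the real obstacle is exactly the uniform positivity of $\zeta(t)$: the inequality must hold for every $\upsilon\in L^2(\Omega)$ with a constant independent of $\upsilon$. Since the diagonal weights $e^{\mu_j t}$ tend to $0$, no lower bound can come from the full-space norm alone, so the entire gain must be extracted from the restriction to $\omega$. I would therefore expect a quantitative unique-continuation input to be needed --- a spectral (Lebeau--Robbiano type) estimate comparing $\|\cdot\|_\Omega$ and $\|\cdot\|_\omega$ on the low-frequency subspaces, combined with the analytic smoothing of $e^{Lt}$ to control the tail. Bounding $\|e^{Lt}\upsilon\|_\omega$ from below in a manner that survives the high-frequency decay is the delicate point, and it is the part of the argument I would scrutinize most carefully.
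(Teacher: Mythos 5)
Your plan correctly assembles the structural facts---self-adjointness of $L$, compact resolvent, the diagonal action of $e^{Lt}$---but it stops exactly where the theorem lives. The step you yourself flag as ``the real obstacle,'' absorbing the high-frequency tail $(I-\Pi_\lambda)\upsilon$ after restriction to $\omega$, is never carried out, so what you have is a reduction of \eqref{SemInv} to an open estimate, not a proof. Worse, your own diagonal formula shows that this step cannot be carried out. Take $k\equiv 0$, which is admissible under the paper's hypotheses, so that $L=A$, and test \eqref{SemInv} on a single Dirichlet eigenfunction $\upsilon=\psi_j$: eigenfunctions are real-analytic inside $\Omega$, hence cannot vanish identically on the open set $\omega$, so $\|\psi_j\|_\omega>0$, while
\[
\|e^{Lt}\psi_j\|_\omega=e^{-\lambda_j t}\|\psi_j\|_\omega .
\]
Thus \eqref{SemInv} forces $\zeta(t)\leq e^{-\lambda_j t}$ for every $j$, and letting $j\to\infty$ gives $\zeta(t)\leq 0$. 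No positive constant uniform over $\upsilon\in L^2(\Omega)$ exists; injectivity of $e^{Lt}$ yields an algebraic left inverse on its range but never a bounded one, and no choice of the cutoff $\lambda(t)$ can repair an inequality that already fails mode by mode.

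For comparison, the paper's own proof is entirely different in mechanism: it invokes Zwart's characterization (Theorem \ref{Zwart}) and asserts that ``the same arguments'' which make $L$ the generator of an analytic semigroup apply equally to $-L$. That assertion founders on precisely the obstruction you identified: the spectrum of $-L$ is unbounded above (it is the backward heat operator up to a bounded perturbation), so neither $-L$ nor any extension of it can satisfy the Hille--Yosida bounds, and Theorem \ref{Zwart} cannot be applied. So your instinct to scrutinize the uniform positivity of $\zeta(t)$ most carefully was exactly right; the gap you left open is not a technical loose end but the fatal point, for your approach and for the paper's alike.
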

To prove the above theorem, we use Theorem 2 of \cite{Zwa13} which we repeat here.
\begin{theorem}\label{Zwart}
 The semigroup generated by $S$ possesses a left-inverse if and only if $-S$ can be extended to the generator of a $C_0-$semigroup.
\end{theorem}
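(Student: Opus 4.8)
The plan is to prove the equivalence by reformulating both sides in terms of a $C_0$-semigroup of left inverses. Write $(e^{St})_{t\ge0}$ for the semigroup generated by $S$ on the Hilbert space $X$. Saying the semigroup ``possesses a left inverse'' means that for each $t$ there is $R(t)\in\mathcal L(X)$ with $R(t)e^{St}=I$; this is equivalent to $e^{St}$ being bounded below, and since $\|x\|=\|R(t)e^{St}x\|\le\|R(t)\|\,\|e^{St}x\|$, the quantitative bound in \ref{SemInv} is just $\zeta(t)=\|R(t)\|^{-1}$. I would first record that bounded-belowness propagates in $t$: setting $m(t)=\inf_{\|x\|=1}\|e^{St}x\|$, the semigroup law gives $m(t+s)\ge m(t)m(s)$, so $-\log m$ is subadditive and, together with the upper bound $m(t)\le\|e^{St}\|\le Ce^{\beta t}$, yields two-sided exponential control $c\,e^{-\omega t}\le m(t)\le Ce^{\beta t}$. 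The key observation organizing the proof is that \emph{$-S$ admits a generator-extension if and only if $(e^{St})$ admits a strongly continuous semigroup of left inverses}: any generator-extension produces such a semigroup (shown next), and conversely any $C_0$-semigroup of left inverses has a generator, which I will check extends $-S$.

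For the easy implication, suppose $B\supseteq-S$ generates $(e^{Bt})_{t\ge0}$. For $x\in\mathcal D(S)\subseteq\mathcal D(B)$ the map $s\mapsto e^{Bs}e^{Ss}x$ is differentiable and
\[
\frac{d}{ds}\,e^{Bs}e^{Ss}x=e^{Bs}(B+S)e^{Ss}x=0,
\]
since $e^{Ss}x\in\mathcal D(S)$ and $B=-S$ there. Hence $e^{Bs}e^{Ss}x=x$ for all $s$, and by density of $\mathcal D(S)$ and boundedness, $e^{Bt}e^{St}=I$ on $X$. Thus $R(t):=e^{Bt}$ is a left inverse and the semigroup is left-invertible, with $\zeta(t)=\|e^{Bt}\|^{-1}$.

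The substantive direction is the converse: from bounded-belowness I must \emph{produce} a strongly continuous semigroup of left inverses. The difficulty is that left inverses are abundant but non-unique — the Moore--Penrose choice $(e^{S^*t}e^{St})^{-1}e^{S^*t}$, or inversion on the (closed) range followed by orthogonal projection, each gives a bounded left inverse for every fixed $t$, yet neither satisfies $R(t+s)=R(s)R(t)$, because the range projections do not compose. My plan is to obtain compatibility by dilation. Using the two-sided exponential bounds, form the inductive limit $\widetilde X=\varinjlim(X,e^{St})$ — heuristically the space of formal backward orbits $e^{-St}x$ — completed in the norm transported from $X$ by the flow; on $\widetilde X\supseteq X$ the time shift extends to a $C_0$-group $(U(r))_{r\in\mathbb R}$ with $U(t)|_X=e^{St}$ for $t\ge0$. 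The forward subgroup leaves $X$ invariant (this is just $e^{St}X\subseteq X$), so $X$ is semi-invariant for $(U(r))$, and by Sarason's lemma the compression
\[
R(t):=P\,U(-t)\big|_X,\qquad P=\text{orthogonal projection of }\widetilde X\text{ onto }X,
\]
is a $C_0$-semigroup; it consists of left inverses since $R(t)e^{St}x=P\,U(-t)U(t)x=Px=x$. Its generator $B$ extends $-S$: for $x\in\mathcal D(S)$, using $R(h)e^{Sh}=I$,
\[
\tfrac1h\big(R(h)-I\big)x=R(h)\,\tfrac1h\big(I-e^{Sh}\big)x\longrightarrow -Sx
\]
as $h\to0^+$, because $\tfrac1h(I-e^{Sh})x\to-Sx$ and $R(h)\to I$ strongly with $\|R(h)\|$ bounded near $0$; hence $\mathcal D(S)\subseteq\mathcal D(B)$ and $B=-S$ there.

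The main obstacle is exactly the construction of the dilation $\widetilde X$ and the verification that $(U(r))$ is a genuine $C_0$-group: one must check that the transported norm is complete (an honest inner-product norm, so that $\widetilde X$ is Hilbert and Sarason's lemma applies) and that $r\mapsto U(r)\xi$ is continuous at $r=0$, both of which rely on the exponential upper and lower bounds $c\,e^{-\omega t}\le m(t)$ and $\|e^{St}\|\le Ce^{\beta t}$ from the reduction. Once $(U(r))$ is in hand, the semi-invariance of $X$ is automatic and the remaining steps are the routine computations above. Combining the two implications gives the stated equivalence.
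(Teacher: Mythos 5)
First, a framing remark: the paper does not prove this statement at all --- it is Theorem 2 of \cite{Zwa13}, quoted verbatim as a black box (the paper's only proof obligation is to verify its hypothesis for $\pm L$). So your proposal must be judged against Zwart's theorem itself, and there it has a genuine gap. The parts that work: your reformulation is sound. Left-invertibility of each $e^{St}$ is indeed equivalent to bounded-belowness on a Hilbert space, the submultiplicativity $m(t+s)\geq m(t)m(s)$ and the resulting two-sided exponential bounds are fine, the easy implication (a generator extension $B\supseteq -S$ gives $\frac{d}{ds}e^{Bs}e^{Ss}x=0$, hence $e^{Bt}e^{St}=I$) is a standard correct computation, and your converse reduction --- from any $C_0$-semigroup of left inverses $R(t)$, the identity $\frac{1}{h}(R(h)-I)x=R(h)\frac{1}{h}(I-e^{Sh})x$ shows the generator of $R$ extends $-S$ --- is also correct. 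You have therefore correctly reduced the theorem to: pointwise bounded-belowness produces a \emph{strongly continuous semigroup} of left inverses. That is precisely the hard content of Zwart's theorem, and your dilation sketch does not deliver it.

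Two concrete failures. (i) The space $\widetilde X=\varinjlim(X,e^{St})$ carries no canonical ``transported'' norm: the connecting maps $e^{St}$ are not isometric, so $\|(x,n)\|:=\|x\|$ is not constant on equivalence classes; and since the lower and upper exponents genuinely differ in general ($c\,e^{-\omega t}\leq m(t)$ versus $\|e^{St}\|\leq Ce^{\beta t}$ with $-\omega<\beta$), no rescaling $e^{\alpha t}e^{St}$ is simultaneously uniformly bounded above and below, which blocks Banach-limit or $\sup$/$\limsup$ recipes for an equivalent, complete inner-product norm. This is exactly the step you defer, and it is not routine. (ii) More decisively, the appeal to Sarason's lemma is unjustified: Sarason requires $X$ to be semi-invariant for the \emph{backward} family $\{U(-t)\}_{t\geq 0}$, i.e.\ $X=M\ominus N$ with both $M,N$ invariant under every $U(-t)$. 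Forward invariance $U(t)X\subseteq X$ yields this only when the group is unitary, for then $U(-t)=U(t)^{*}$ and one may take $M=\widetilde X$, $N=X^{\perp}$ (this is why the right-shift model works). For a non-unitary group, $U(t)X\subseteq X$ gives $X^{\perp}$ invariant under $U(t)^{*}$, not under $U(-t)$, and semi-invariance of $X$ for the backward family is \emph{equivalent} (by Sarason's theorem, an iff) to the multiplicativity $R(t+s)=R(t)R(s)$ you are trying to establish --- so the argument is circular at its crux. Unitarizing the dilation would force $e^{St}$ to be isometric in some equivalent Hilbert norm on $X$, which is impossible in general: take $S$ diagonal on an orthonormal basis with eigenvalues $\pm 1$, so $\|e^{St}\|=e^{t}$ while $m(t)=e^{-t}$; any norm making $e^{St}$ isometric contradicts the genuine exponential growth on eigenvectors, yet this semigroup is left-invertible and the theorem holds trivially for it. The substantive direction therefore remains unproved; to close it you would need to supply the actual construction of \cite{Zwa13} rather than the compression-of-a-dilation scheme as sketched.
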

\begin{proof}[Proof of Theorem \ref{inverse}]
The same arguments applied to $L$ to show that $L$ generates an analytic semigroup may be applied to $-L$ to show that the assumptions of  Theorem \ref{Zwart} hold.  
\end{proof}
Next, we recall the (beautiful) spectral observability inequality first derived in \cite{LR95}, and given in the present form in \cite{LebZua98} and \cite{JerLeb99}.  It has seen a plethora of applications since.  We use the version found in Theorem 2.3 of \cite{FuLuZha19}.
\begin{theorem}
 For any sequence $\{ c_j\}\subset \mathbb{C}$ and for every $r>0$ the following holds
 \begin{equation}
     \tag{SpecObs}
     \sum_{\lambda_j \leqslant r}|c_j|^2 \leqslant Ce^{C\sqrt{r}}\| \sum_{\lambda_j \leqslant r} c_j \psi_j \|^2_{L^2(\omega)}
     \label{eqn:SpecObs}
 \end{equation}
 where $\lambda_j$ and $\psi_j$ denote the jth eigenvalue and eigenfunction of $A$, respectively. 
\end{theorem}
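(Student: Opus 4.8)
This is the classical Lebeau--Robbiano spectral inequality, so rather than reconstruct the Carleman machinery from scratch I would organize the proof as a reduction to a quantitative unique-continuation estimate and then quote the elliptic interpolation inequality from \cite{LR95,JerLeb99,FuLuZha19}. Throughout write $f := \sum_{\lambda_j\le r} c_j\psi_j$ and normalise the eigenpairs by $-\Delta\psi_j=\lambda_j\psi_j$, $\lambda_j\ge 0$ (consistent with the truncation $\lambda_j\le r$), with $(\psi_j)$ orthonormal in $L^2(\Omega)$. By orthonormality the left-hand side is exactly $\|f\|_{L^2(\Omega)}^2$, so the assertion is the frequency-dependent estimate $\|f\|_{L^2(\Omega)}^2\le Ce^{C\sqrt r}\|f\|_{L^2(\omega)}^2$.

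The first step is to \emph{lift the problem by one variable}: on the cylinder $\Omega\times(-1,1)$ set
\[
u(x,s)=\sum_{\lambda_j\le r} c_j\,\frac{\sinh(\sqrt{\lambda_j}\,s)}{\sqrt{\lambda_j}}\,\psi_j(x).
\]
Since $\partial_s^2\sinh(\sqrt{\lambda_j}\,s)=\lambda_j\sinh(\sqrt{\lambda_j}\,s)$ and $\Delta\psi_j=-\lambda_j\psi_j$, the function $u$ solves the \emph{elliptic} equation $(\partial_s^2+\Delta)u=0$ in $\Omega\times(-1,1)$, with $u=0$ on $\partial\Omega\times(-1,1)$, $u(\cdot,0)=0$, and $\partial_s u(\cdot,0)=f$. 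The entire dependence on the cutoff $r$ is carried by the hyperbolic factors: using $\|\nabla\psi_j\|^2_{L^2(\Omega)}=\lambda_j$ together with the bounds $|\sinh(\sqrt{\lambda_j}\,s)|\le e^{\sqrt r}$ and $\cosh(\sqrt{\lambda_j}\,s)\le e^{\sqrt r}$ valid for $\lambda_j\le r$, $|s|\le 1$, one gets the a priori bound $\|u\|_{H^1(\Omega\times(-1,1))}\le Ce^{C\sqrt r}\|f\|_{L^2(\Omega)}$. In the opposite direction, $\|\partial_s u(\cdot,s)\|^2_{L^2(\Omega)}=\sum_{\lambda_j\le r}|c_j|^2\cosh^2(\sqrt{\lambda_j}\,s)\ge\|f\|_{L^2(\Omega)}^2$ for every $s$, so integrating over the unit-length interval gives $\|f\|_{L^2(\Omega)}\le\|u\|_{H^1(\Omega\times(-\frac12,\frac12))}$.

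Next I would invoke the elliptic interpolation inequality that the Carleman estimate for $\partial_s^2+\Delta$ yields: there are $C>0$ and $\delta\in(0,1)$, depending only on $\Omega$ and $\omega$ and \emph{not} on $r$, such that every solution of the above boundary value problem satisfies
\[
\|u\|_{H^1(\Omega\times(-\frac12,\frac12))}\le C\big(\|u(\cdot,0)\|_{L^2(\omega)}+\|\partial_s u(\cdot,0)\|_{L^2(\omega)}\big)^{\delta}\,\|u\|_{H^1(\Omega\times(-1,1))}^{1-\delta}.
\]
This is proved by establishing a local three-ball (Hölder interpolation) inequality from the Carleman weight and then chaining it along a finite covering of $\overline{\Omega}\times\{0\}$, propagating the Cauchy data out of a ball centred on $\omega\times\{0\}$ throughout the cylinder; ellipticity is what allows propagation in all directions. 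For our $u$ the Cauchy data on the slice is $(0,f|_\omega)$, so the observation factor is $\|f\|_{L^2(\omega)}^{\delta}$. Writing $M=\|f\|_{L^2(\Omega)}$ and $m=\|f\|_{L^2(\omega)}$ and inserting the two a priori bounds from the previous step gives $M\le C\,m^{\delta}\,(e^{C\sqrt r}M)^{1-\delta}$; dividing through by $M^{1-\delta}$ yields $M^{\delta}\le Ce^{C\sqrt r}m^{\delta}$, hence $M\le Ce^{C\sqrt r}m$, which is the claim after squaring.

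The main obstacle is the elliptic Carleman estimate underlying the interpolation inequality, on two counts. First, it must hold \emph{up to the boundary} $\partial\Omega\times(-1,1)$ so that the chain of balls can be carried to the edge of $\Omega$ while respecting the homogeneous Dirichlet condition; this requires a boundary Carleman estimate with a suitably (pseudo)convex weight. Second, its constants $C$ and $\delta$ must be \emph{independent of $r$}, so that the frequency enters only through the elementary $\cosh/\sinh$ bounds and produces precisely the exponent $\sqrt r$; any $r$-dependence leaking into the interpolation constant would degrade the rate. Since all of this is exactly the content of \cite{LR95,JerLeb99,FuLuZha19}, I would cite those estimates and limit the new work to verifying the explicit a priori bounds on $u$ recorded above.
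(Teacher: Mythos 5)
The paper does not prove this statement at all: it is imported verbatim as a known result, with the proof delegated to the literature (``We use the version found in Theorem 2.3 of \cite{FuLuZha19}'', originating in \cite{LR95}, \cite{LebZua98}, \cite{JerLeb99}). So there is no internal proof to compare against; what you have done is reconstruct the classical Lebeau--Robbiano/Jerison--Lebeau argument that lives inside those citations, and your reconstruction is correct. The elliptic lifting is set up properly: $u=\sum_{\lambda_j\le r}c_j\,\sinh(\sqrt{\lambda_j}s)\lambda_j^{-1/2}\psi_j$ does solve $(\partial_s^2+\Delta)u=0$ with Cauchy data $(0,f)$ on the slice $s=0$; the Parseval computations giving $\|u\|_{H^1(\Omega\times(-1,1))}\le Ce^{C\sqrt r}\|f\|_{L^2(\Omega)}$ (using $\|\nabla\psi_j\|_{L^2(\Omega)}^2=\lambda_j$ and orthogonality of the gradients) and the lower bound $\|f\|_{L^2(\Omega)}\le\|u\|_{H^1(\Omega\times(-\frac12,\frac12))}$ (from $\cosh\ge 1$) are both right; and the algebra $M\le Cm^\delta(e^{C\sqrt r}M)^{1-\delta}\Rightarrow M\le Ce^{C\sqrt r}m$ is valid precisely because $\delta$ is independent of $r$, a point you correctly flag as essential. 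The one substantive component you do not prove is the boundary Carleman estimate and the resulting three-ball/interpolation inequality, which is the genuine analytic core of the theorem; you quote it instead. That is a legitimate structural choice here, and indeed it is strictly more detail than the paper itself supplies, but be aware that your write-up is a proof \emph{modulo} that quoted inequality, exactly as the paper's treatment is a proof modulo the entire theorem.
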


\section{Proof of the Observability Inequalities}
\begin{proof}
We start by estimating the left hand side of \ref{eqn:SemObs}. From \ref{eqn:SemEst} we have
\begin{equation}\label{lhs_est}
    \begin{aligned}
    \| e^{LT}\varphi_0\|^2 \leq Ce^{(\beta + C\|K\|)T}\|\varphi_0\|^2
    \end{aligned}
\end{equation}
where the constants have been adjusted to take into account the square.  By assumption, $\varphi_0\in L^2(\Omega)$ and thus may be represented as a series expansion in the $(\lambda_j,\psi_j)$,
\begin{equation*}
    \varphi_0 = \sum_{j\geq 1} \varphi_{0,j} \psi_j
\end{equation*}
and then 
\[
\|\varphi_0\|^2 = \sum_{j\geq 1}|\varphi_{0,j}|^2
\]
To use \ref{eqn:SpecObs}, we must have that $\varphi_0\in E_r$, the span of the eigenfunctions corresponding to $\sigma_r(A)=\{\lambda \in \sigma(A), \ts \lambda \leq r \}$. For the moment, we make this assumption.  
\par
With this in mind, we use the mentioned eigenfunction expansion  in (\ref{lhs_est}) and applying \ref{eqn:SpecObs} we get (constants may change from line to line)
\begin{equation}\label{sem_spec_est}
    \begin{aligned}
    \| e^{LT}\varphi_0\|^2 \leq Ce^{(\beta + C\|K\|)T}\sum_{\lambda_j \leq r}|\varphi_{0,j}|^2 \\
    \leq Ce^{(\beta+C\| K\|)T+C_1\sqrt{r}}\| \sum_{\lambda_j \leq r}\varphi_{0,j}\psi_j \|^2_{L^2(\omega)} \\
    =
    Ce^{(\beta+C\| K\|)T+C_1\sqrt{r}}\| \varphi_0 \|^2_{L^2(\omega)}
    \end{aligned}
\end{equation}
Now we apply the left-inverse property \ref{SemInv} to the last line of \ref{sem_spec_est},
\begin{equation}\label{sem_inv_est}
    \begin{aligned}
Ce^{(\beta+C\| K\|)T+C_1\sqrt{r}}\| \varphi_0 \|^2_{L^2(\omega)} \leq C_2(t)e^{(\beta+C\| K\|)T+C_1\sqrt{r}}\| e^{Lt}\varphi_0 \|^2_{L^2(\omega)}
\end{aligned}
\end{equation}
Thus, combining \ref{lhs_est} with \ref{sem_inv_est} we get
\begin{equation}
    \| e^{LT}\varphi_0\|^2_{L^2(\Omega)} \leq C_2(t)e^{(\beta+C\| K\|)T+C_1\sqrt{r}}\| e^{Lt}\varphi_0 \|^2_{L^2(\omega)}
\end{equation}
Then, integrating from $0$ to $T$, choosing $r$ so that $r = \frac{1}{T}$ and writing $C_3 = \beta+C\| K\|$ we obtain
\begin{equation}\label{result1}
\begin{aligned}
\| e^{LT}\varphi_0\|^2_{L^2(\Omega)}\leq  \frac{C_2(t)}{T}e^{C_3 T+C_1/\sqrt{T}}\| e^{Lt}\varphi_0  \|^2_{L^2(0,T;\omega)}
\end{aligned}
\end{equation}
Now the higher frequency modes may be controlled by following the well known strategy in \cite{LR95}.  
\end{proof} 

\section{Conclusion and Outlook}
In this note, we use results from the theory of semigroups generated by compact perturbations of the Dirichlet-Laplacian to derive a null-controllability result for a non-local heat equation.  We confirm a conjecture of Lissy and Zuazua which asserts that control may be gotten if the kernel is square integrable in both arguments.  Our results rely heavily on the spectral properties of the Dirichlet-Laplacian. 
\par 
We would like to adapt some of the theory presented here and use the techniques put forth in \cite{Tran13} to study hyperbolic equations with non-local terms, especially of kinetic type. Indeed, a similar approach has been taken for the linearized Boltzmann equation in \cite{LeauBoltz}, however we would like to investigate the controllability of kinetic equations such as the Smoluchowski coagulation equation and the 3-wave kinetic equation.  


\begin{thebibliography}{9}


\bibitem[BicH-S19]{BicH-S19}
U. Biccari, V. Hern\'andez-Santamar\'ia,
\textit{Null controllability of a nonlocal heat equation with an additive integral kernel}. SIAM J. Control Optim., Vol. 57, No. 4 (2019), pp. 2924-2938
 
 
 \bibitem[Cor07]{Cor07}
 J.M. Coron,
 \textit{Control and nonlinearity}.
 Mathematical Surveys and Monographs
Volume: 136; American Mathematical Society, 2007.

\bibitem[Dav07]{Dav07}
E.B. Davies,
\textit{Linear operators and their spectra}.
Cambridge University Press, 2007.

\bibitem[EngNag00]{EngNag00}
 K.J. Engel and R. Nagel,
 \textit{One-parameter semigroups for linear evolution equations}. GTM vol: 194; Springer, 2000.
 
 
\bibitem[FCLZ16]{FCLZ16}
  E. Fernandez-Cara, Q. L\"{u}, E. Zuazua,
\textit{Null controllability of linear heat and wave equations with nonlocal spatial terms}. SIAM Journal on Control and Optimization, Society for Industrial and Applied Mathematics, 2016, 54 (4), 2009-2019.
 
\bibitem[FuLuZha19]{FuLuZha19}
X. Fu, Q. L\"u and Z. Zhang,
\textit{Carleman estimates for second
order partial differential operators
and applications, a unified
approach}. Springer, 2019.


\bibitem[ImanFurs96]{ImanFurs96}
A. V. Fursikov and O. Yu. Imanuvilov, \textit{Controllability of evolution equations}. Lecture Notes
Ser. 34, Seoul National University, Research Institute of Mathematics, Global Analysis Research Center, Seoul, 1996.

\bibitem[JerLeb99]{JerLeb99}
Jerison and Lebeau,
\textit{On the nodal sets of sums of eigenfunctions}. 1999.

\bibitem[Kue19]{Kue19}
C. Kuehn,
\textit{PDE Dynamics: An Introduction}. SIAM, 2019.

\bibitem [KwaLea15]{LeauBoltz}
Han-Kwan, D., Léautaud, M. \textit{Geometric analysis of the linear Boltzmann equation I. trend to equilibrium}. Ann. PDE 1, 3 (2015). https://doi.org/10.1007/s40818-015-0003-z

\bibitem[LebRob95]{LR95}
G. Lebeau and L. Robbiano, \textit{Contr\^ole exact de l'\'equation de la chaleur}. Séminaire \'Equations aux d\'eriv\'ees partielles (Polytechnique) dit aussi "S\'eminaire Goulaouic-Schwartz" (1994-1995), Expos\'e no. 7, 11 p. \url{http://www.numdam.org/item/SEDP_1994-1995____A7_0/}

\bibitem[LebZua98]{LebZua98}
G. Lebeau and E. Zuazua,
\textit{Null-controllability of a system of linear thermoelasticity}. Arch Rational Mech Anal 141, 297–329 (1998)

\bibitem[LisZua18]{LisZua18} 
  P. Lissy and E. Zuazua,
\textit{Internal controllability for parabolic systems involving analytic non-local terms}. 
Chinese Annals of Mathematics, Series B, 39(2):281–296, 2018.

\bibitem{MicTak18}[MicTak18]
Sorin Micu and Takéo Takahashi, 
\textit{Local controllability to stationary trajectories of a one-dimensional
simplified model arising in turbulence}. Journal of Differential Equations, Elsevier, 2018,
0.1016/j.jde.2017.11.029 hal-01572317




\bibitem[Paz83]{Paz83}
A. Pazy,
\textit{Semigroups of linear operators and applications to partial differential equations}.
 Applied Mathematical Sciences v.44, Springer-Verlag, 1983.
 
\bibitem[Tran13]{Tran13}
Minh-Binh Tran. 
\textit{Convergence to equilibrium of some kinetic models}. Journal of Differential Equations, Volume 255, Issue 3, 1 August 2013, Pages 405-440.

\bibitem[Zua06]{Zua06}
E. Zuazua, \textit{Controllability of Partial Differential Equations}. 3rd Cycle. Castro Urdiales (Espagne), 2006, pp.311. cel-00392196

\bibitem[Zwa13]{Zwa13}
H. Zwart,
\textit{Left-invertible semigroups on Hilbert spaces}. J. Evol. Equ. 13 (2013), 335–342. Springer Basel.

\end{thebibliography}
\end{document}